\theoremstyle{plain}
\newtheorem{theorem}{Theorem}
\newtheorem{corollary}[theorem]{Corollary}
\newtheorem{lemma}[theorem]{Lemma}
\theoremstyle{remark}
\newtheorem{example}[theorem]{Example}
\newcommand{\R}{\mathbb{R}}
\newcommand{\N}{\mathbb{N}}
\begin{document}

\title{On the closure of translation-dilation invariant linear spaces of polynomials }
\author{J.~M.~Almira and L.~Sz\'ekelyhidi}

\subjclass[2010]{Primary 41A10, 40A30; Secondary 39A70.}

\keywords{Polynomials, Translation Invariance, Dilation Invariance, Pointwise Convergence, Difference Operators.}
\let\thefootnote\relax\footnotetext{Second author was supported by the Hungarian National Foundation for Scientific Research (OTKA),   Grant No. K111651.}

\address{Departamento de Matem\'{a}ticas, Universidad de Ja\'{e}n, E.P.S. Linares,  Campus Cient\'{\i}fico Tecnol\'{o}gico de Linares, Cintur\'{o}n Sur s/n,  23700 Linares, Spain}
\email{jmalmira@ujaen.es}
\address{Institute of Mathematics, University of Debrecen, Egyetem t\'er 1, 4032 Debrecen, Hungary --- Department of Mathematics, University of Botswana, 4775 Notwane 
Rd. Gaborone, Botswana }
\email{lszekelyhidi@gmail.com}

\maketitle
\begin{abstract}
Assume that a linear space of real polynomials in $d$ variables is given which is translation and dilation invariant. We show that if a sequence in this space converges pointwise to a polynomial, then the limit polynomial belongs to the space, too.
\end{abstract}

\markboth{J.~M.~Almira, L.~Sz\'ekelyhidi}{On the closure of translation-dilation invariant linear spaces of polynomials}

\section{Introduction}
At the 49th International Symposium on Functional Equations in Graz, Mariatrost, Austria, 2011 and later at the 14th International Conference on Functional Equations and Inequalities in B\k{e}dlewo, Poland, 2011 the second author proposed the following problem: Assume that $V$ is a linear space of real polynomials in $n$ variables which is translation invariant. Suppose moreover that the sequence $(p_n)$ in $V$ converges pointwise to a polynomial $p$. Is it true that $p$ is in $V$? Despite several efforts of different researchers this question has still remained open. In this note we solve the problem in the positive for the special case when $V$ is a translation-dilation invariant linear space of polynomials.
 
\section{Translation and dilation invariant subspaces} 
In this paper $\R$ denotes the set of real numbers and $\R[x]$ denotes the polynomial ring in the variables $x=(x_1,x_2,\dots,x_d)$, where $d$ is a positive integer. We shall use the following standard notation. For every $a=(a_1,a_2,\dots,a_d)$ and $x=(x_1,x_2,\dots,x_d)$ in $\R^d$ we let $a\cdot x=(a_1 x_1,a_2 x_2,\dots,a_d x_d)$. For every multi-index $\alpha=(\alpha_1,\alpha_2\dots,\alpha_d)$ in $\N^d$ we write $|\alpha|=\alpha_1+\alpha_2+\dots+\alpha_d$ and $x^{\alpha}=x_1^{\alpha_1} x_2^{\alpha_2}\dots x_d^{\alpha_d}$, where we use the convention $0^0=1$. For convenience the monomial $x\mapsto x^{\alpha}$ will be denoted by $q_{\alpha}$ for each multi-index $\alpha$. Also we introduce the notation $\alpha!=\alpha_1! \alpha_2!\dots \alpha_d!$ for every multi-index $\alpha $ and we shall use the partial order $ \alpha\leq \beta$ componentwise. We shall use the notation $\widehat{\N}$ for the set $\N\cup \{\infty\}$ and we extend the order relation $\leq$ from $\N$ to $\widehat{\N}$, as well as the partial order relation $\leq$ from $\N^d$ to $\widehat{\N}^d$ in the obvious manner. The initial section corresponding to $\alpha$ in $\widehat{\N}$ with respect to the partial order $\leq$ will be denoted by $[\alpha]$. In other words
$$
[\alpha]=\{\beta\in \widehat{\N}^d:\, \beta\leq \alpha\}.
$$
\vskip.2cm 

We also introduce the spaces  $B_{[\alpha]}=\text{span}\{x^\beta:\beta\in [\alpha]\}$. In addition we shall use the standard notation for partial differential operators
$$
P(\partial)=\sum_{\alpha} c_{\alpha} \partial_1^{\alpha_1}\partial_2^{\alpha_2}\dots \partial_d^{\alpha_d}
$$
whenever $P:\R^d\to\R$ is the polynomial $P(x)=\sum_{\alpha} c_{\alpha} x_1^{\alpha_1}x_2^{\alpha_2}\dots x_d^{\alpha_d}$. In particular, we have $q_{\alpha}(\partial)=\partial^{\alpha}$.
\vskip.1cm

Also we shall use difference operators with multi-index notation. For every $k=1,2,\dots,d$  the symbol $\Delta_k$ denotes the partial difference operator on polynomials acting on the $k$-th variable with increment $1$, that is
$$
\Delta_k p(x)=p(x+e_k)-p(x)
$$
for each $x$, where $e_k$ is the element in $\R^d$ whose $k$-th component is $1$, all the others are $0$. In addition, $e$ is the element whose all components are $1$. Then $\Delta$ is the vector difference operator defined by $\Delta=(\Delta_1,\Delta_2,\dots,\Delta_d)$. Using this notation we have
$$
\Delta^{\alpha} p=\Delta_1^{\alpha_1} \Delta_2^{\alpha_2}\dots \Delta_d^{\alpha_d}p
$$
for every multi-index $\alpha$. Then the meaning of $P(\Delta)$ is obvious for every polynomial $P$ in $d$ variables. In particular, we can write $q_{\alpha}(\Delta)=\Delta^{\alpha}$.
\vskip.2cm

Given a function $f:\R^d\to\R$ and $y$ in $\R^d$ we denote by $\tau_y$, resp. $\sigma_y$ the functions defined by
$$
\tau_yf(x)=f(x+y),\enskip \sigma_yf(x)=f(x\cdot y)
$$
whenever $y$ is in $\R^d$. We call $\tau_y$, resp. $\sigma_y$  {\it translation}, resp. {\it dilation} by $y$, further $\tau_yf$, resp. $\sigma_yf$ the {\it translate}, resp. the {\it dilate} of $f$ by $y$. A set $H$ of real functions on $\R^d$ is called {\it translation invariant}, resp. {\it dilation invariant}, if $\tau_yf$ is in $H$, resp. $\sigma_yf$ is in $H$ for each $f$ in $H$ and for every $y$ in $\R^d$. If $H$ is both translation and dilation invariant, then we call it {\it translation-dilation invariant}, or shortly a {\it TDI-set}. Given a function $f$ on $\R^d$ the intersection of all translation invariant, resp. dilation invariant, resp. translation-dilation invariant linear spaces including $f$ is denoted by $\tau(f)$, resp. $\sigma(f)$, resp $\tau\sigma(f)$. Clearly, these are linear spaces, and $\tau(f)$ is translation invariant, $\sigma(f)$ is dilation invariant, further $\tau\sigma(f)$ is translation-dilation invariant.  TDI-subspaces have been studied extensively and the following classification of closed TDI-subspaces has been proved in \cite{P}. 

\begin{theorem}\label{Pinkus} Every closed subspace of $\mathcal{C}(\mathbb{R}^d)$ which is invariant under all mappings  $S_{a,b}=\sigma_a\tau_b$ with arbitrary $a,b$ in $\mathbb{R}^d$ is the closure of the linear span of the union of the sets $B_{[n_k]}$ for a finite set of points $n_k$ in $\widehat{\N}^d$.
\end{theorem}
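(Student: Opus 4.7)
\emph{Plan.} One inclusion is a direct verification. For each $n\in\widehat{\N}^d$ the space $B_{[n]}$ is itself TDI: the identities $\tau_y q_\beta=\sum_{\gamma\le\beta}\binom{\beta}{\gamma}y^{\beta-\gamma}q_\gamma$ and $\sigma_y q_\beta=y^\beta q_\beta$ show that both remain in $B_{[n]}$ whenever $\beta\le n$. Hence any finite sum $\sum_k B_{[n_k]}$ is TDI, and so is its closure.

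For the converse, let $V$ be a closed TDI subspace of $C(\R^d)$. I would first analyse the polynomial elements of $V$. Given $p=\sum_\alpha c_\alpha q_\alpha\in V$ with support $S=\{\alpha:c_\alpha\ne 0\}$, the claim is $\tau\sigma(p)=\sum_{\alpha\in S}B_{[\alpha]}$. The nontrivial containment $B_{[\alpha]}\subseteq\tau\sigma(p)$ for $\alpha\in S$ comes from two successive Vandermonde-type extractions: first, the dilates $\sigma_{a_j}p=\sum_\alpha c_\alpha a_j^{\alpha}q_\alpha$, with $a_j\in\R^d$ chosen so that the matrix $(a_j^{\alpha})_{\alpha\in S,\,j}$ is invertible, allow one to solve for each monomial $q_\alpha$, $\alpha\in S$, as a linear combination of the $\sigma_{a_j}p$; second, the translates $\tau_b q_\alpha=\sum_{\gamma\le\alpha}\binom{\alpha}{\gamma}b^{\alpha-\gamma}q_\gamma$ produce every $q_\gamma$ with $\gamma\le\alpha$ by an analogous Vandermonde argument in $b$.

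The main obstacle is extending this picture beyond polynomials, i.e.\ showing that $V\cap\R[x]$ is dense in $V$. My plan is to combine two smoothing operations. Translation invariance together with closedness implies that $g:=f*\phi\in V$ for every $f\in V$ and $\phi\in C_c^\infty(\R^d)$ (the convolution being a limit, uniform on compact sets, of finite linear combinations of translates of $f$), and $g$ is smooth. Dilation invariance is then exploited on $g$: for a fixed $n\in\N$, choose $N\ge n$ and distinct parameters $s_0,\dots,s_N\in\R\setminus\{0\}$, and Vandermonde-invert $\sum_j\lambda_j s_j^k=\delta_{k,n}$ for $k=0,\dots,N$. The combination $\sum_j\lambda_j\,\sigma_{(s_j,\dots,s_j)}g$ lies in $V$, and by Taylor's theorem applied to the smooth map $s\mapsto g(sx)$, as $\max_j|s_j|\to 0$ it converges uniformly on compact sets to the $n$-th homogeneous component $P_n(x)=\sum_{|\alpha|=n}\frac{1}{\alpha!}x^\alpha\partial^\alpha g(0)$. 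Thus every such Taylor component at the origin lies in $V$, and by translation invariance the same holds at every point. A coordinated diagonal limit, passing $N\to\infty$ and the approximate identity $\phi\to\delta$ together, should then realise each $f\in V$ as a limit, uniform on compact sets, of polynomials in $V$; rigorously controlling this double limit, in the absence of real-analyticity of $f*\phi$, is the principal technical hurdle.

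Once the density $V=\overline{V\cap\R[x]}$ is established, the set $\mathcal{I}=\{\alpha\in\widehat{\N}^d:q_\alpha\in V\}$ is downward closed under $\le$, and a Dickson-type lemma on $\widehat{\N}^d$ (partition multi-indices by the subset of coordinates equal to $\infty$ and apply classical Dickson's lemma within each finite-dimensional piece) yields finitely many maximal elements $n_1,\dots,n_r\in\widehat{\N}^d$ with $V=\overline{\sum_{k=1}^r B_{[n_k]}}$.
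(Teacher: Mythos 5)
First, a point of reference: the paper does not prove this statement at all --- it is Pinkus's classification theorem, quoted verbatim from \cite{P} and used as background. So there is no in-paper proof to compare against; you are attempting to reprove a deep cited result, and your attempt has to be judged on its own.

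Your easy inclusion, your analysis of $\tau\sigma(p)$ for a single polynomial $p$ (which parallels the paper's Lemmas \ref{2}, \ref{3} and Corollary \ref{6}), and your extraction of the homogeneous Taylor components $P_n$ of $g=f*\phi$ via a Vandermonde combination of dilates $\sigma_{(s,\dots,s)}g$ are all sound (the last one works if you take $s_j=st_j$ with fixed distinct $t_j$ and let $s\to 0$, so that the blow-up of the coefficients $\lambda_j$ is exactly cancelled by the $o(s^N)$ Taylor remainder). But the step you yourself flag as ``the principal technical hurdle'' is a genuine, unfilled gap, and it is precisely the heart of the theorem. Knowing that every homogeneous Taylor component of $g=f*\phi$ at every point lies in $V$ does not produce polynomials in $V$ converging to $g$ uniformly on compacta: $g$ is smooth but in general not real-analytic, so its Taylor polynomials $\sum_{n\le N}P_n$ need not converge to $g$ anywhere off the base point, and no choice of ``coordinated diagonal limit'' in $(N,\phi)$ obviously repairs this, since for a fixed compact $K$ and $\varepsilon>0$ you need a single polynomial $q\in V$ with $\|f-q\|_K<\varepsilon$ and the Taylor polynomials of $f*\phi$ are not such approximants. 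Establishing $V=\overline{V\cap\R[x]}$ requires a different mechanism (for instance a duality argument showing that any compactly supported measure annihilating all monomials contained in $V$ annihilates $V$, which is essentially what Pinkus does); as written, your argument proves only the inclusion $\overline{\mathrm{span}}\{P_n\}\subseteq V$, not the reverse one you need.

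Two smaller issues. Your set $\mathcal{I}=\{\alpha\in\widehat{\N}^d: q_\alpha\in V\}$ is ill-posed for $\alpha$ with an infinite coordinate, where $q_\alpha$ is not a monomial; the correct object is the downward-closed set $\Omega_V\subseteq\N^d$, and the statement that a downward-closed subset of $\N^d$ is a finite union of initial sections $[n_k]$ with $n_k\in\widehat{\N}^d$ is true but needs more than one application of Dickson's lemma (a downward-closed set can fail to have maximal elements in $\N^d$; one must first adjoin the appropriate points at infinity and then check that finitely many of the resulting boxes cover everything). These are repairable; the density gap is not, without a new idea.
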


We note that here "closed" refers to the topology of the uniform convergence on compact sets.
\vskip.1cm

In this paper we study translation-dilation linear spaces of polynomials. We begin with some preliminary lemmas. The first one is a standard result in Algebra \cite[Chapter 5]{Lang}, but we include the proof for the sake of completeness.

\begin{lemma}\label{1}
Let $S$ be a set of different multi-indices in $\N^d$. Then the set of monomials $\{q_{\alpha}:\,\alpha\in S\}$ is linearly independent.
\end{lemma}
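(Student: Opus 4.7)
The plan is to prove the contrapositive of the definition of linear independence: any finite linear combination that equals the zero polynomial must have all coefficients zero. So I will assume $\sum_{\alpha\in T} c_\alpha q_\alpha=0$ for some finite subset $T\subset S$, fix an arbitrary $\gamma\in T$, and show $c_\gamma=0$ by applying the partial differential operator $\partial^\gamma$ to both sides and evaluating at the origin.

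The essential ingredient is a routine computation. For multi-indices $\gamma,\alpha\in\N^d$, the derivative $\partial^\gamma q_\alpha$ vanishes identically whenever $\gamma_i>\alpha_i$ for some $i$, since then we differentiate some variable more times than its degree. When $\gamma\leq\alpha$ componentwise, a direct calculation gives
$$
\partial^\gamma q_\alpha(x)=\prod_{i=1}^d \frac{\alpha_i!}{(\alpha_i-\gamma_i)!}\,x_i^{\alpha_i-\gamma_i}.
$$
Evaluating at $x=0$ kills every term in which some $\alpha_i-\gamma_i$ is strictly positive, so we obtain $\partial^\gamma q_\alpha(0)=\gamma!$ when $\alpha=\gamma$ and $0$ otherwise; that is, $\partial^\gamma q_\alpha(0)$ is a Kronecker-delta-like expression.

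Applying $\partial^\gamma$ to the identity $\sum_{\alpha\in T} c_\alpha q_\alpha=0$ and evaluating at $0$ therefore yields $c_\gamma\cdot \gamma!=0$, forcing $c_\gamma=0$. Since $\gamma\in T$ was arbitrary, all coefficients vanish; since $T$ was an arbitrary finite subset of $S$, the whole set $\{q_\alpha:\alpha\in S\}$ is linearly independent. There is no genuine obstacle here, as the argument is purely algebraic; the only point one has to treat carefully is to split the evaluation $\partial^\gamma q_\alpha(0)$ into the two cases $\gamma\not\leq\alpha$ (derivative identically zero) and $\gamma\leq\alpha$ with $\gamma\neq\alpha$ (derivative a nonconstant monomial vanishing at the origin) to conclude that it is nonzero only when $\alpha=\gamma$.
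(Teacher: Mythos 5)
Your proof is correct and follows essentially the same route as the paper: apply $\partial^{\gamma}$ to the vanishing linear combination, evaluate at the origin, and observe that only the term with $\alpha=\gamma$ survives, yielding $c_{\gamma}\,\gamma!=0$. You are in fact slightly more careful than the paper (explicit case split for $\gamma\not\leq\alpha$ and the correct coefficient $\alpha!/(\alpha-\gamma)!$), but the argument is the same.
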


\begin{proof}
Indeed, if $c_{\alpha}$ is a real number for each $\alpha$ in $S$ such that we have
$$
\sum_{\alpha} c_{\alpha} q_{\alpha}(x)=0
$$
for each $x$ in $\R^d$, and $\alpha_0$ is any element in $S$, then we apply the differential operator $\partial^{\alpha_0}$ to the above equation. We obtain
$$
\sum_{\alpha\in S} c_{\alpha} \partial^{\alpha_0}q_{\alpha}(x)=\sum_{\alpha\geq \alpha_0, \alpha\in S} c_{\alpha} \frac{\alpha!}{\alpha_0!} x^{\alpha-\alpha_0}=0
$$
for each $x$ in $\R^d$. Now we substitute $x=0$. If for some $\alpha$ in $S$ $\alpha>\alpha_0$, then the corresponding term is zero, hence the above sum is equal to $c_{\alpha_0} \frac{\alpha_0!}{\alpha_0!}$ which implies $c_{\alpha_0}=0$.
\end{proof}

\begin{lemma}\label{2}
Let $p:\R^d\to\R$ be a polynomial. Then $\tau(p)$ is generated by all partial derivatives of $p$.
\end{lemma}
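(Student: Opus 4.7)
The plan is to prove the two inclusions $\tau(p)\subseteq W$ and $W\subseteq \tau(p)$, where $W:=\mathrm{span}\{\partial^\alpha p:\alpha\in\N^d\}$ (a finite-dimensional space, since only finitely many partial derivatives of $p$ are nonzero). First I would observe that $\tau(p)$ coincides with $L:=\mathrm{span}\{\tau_y p:y\in\R^d\}$: the space $L$ is translation invariant because $\tau_z\tau_y p=\tau_{y+z}p$, it contains $p=\tau_0 p$, so $\tau(p)\subseteq L$; and conversely every translate $\tau_y p$ lies in the translation invariant space $\tau(p)$, so $L\subseteq\tau(p)$.

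The inclusion $\tau(p)\subseteq W$ then follows at once from Taylor's formula for the polynomial $p$, namely
$$\tau_y p(x)=p(x+y)=\sum_{\alpha}\frac{y^\alpha}{\alpha!}\,\partial^\alpha p(x),$$
a finite sum bounded by $\deg p$. Each translate is thus a linear combination of partial derivatives of $p$, hence $L\subseteq W$.

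For the reverse inclusion I would use a Vandermonde-type argument. Let $S=\{\alpha\in\N^d:\partial^\alpha p\ne 0\}$ and $N=|S|$. The plan is to select points $y^{(1)},\dots,y^{(N)}$ in $\R^d$ so that the $N\times N$ matrix $M=\bigl(q_\alpha(y^{(i)})/\alpha!\bigr)_{1\le i\le N,\,\alpha\in S}$ is invertible; once this is achieved, solving the linear system
$$\tau_{y^{(i)}}p=\sum_{\alpha\in S}M_{i\alpha}\,\partial^\alpha p\qquad(i=1,\dots,N)$$
expresses every $\partial^\alpha p$ as a linear combination of translates of $p$, placing it in $\tau(p)$. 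The existence of such points is exactly where Lemma~\ref{1} enters: the monomials $\{q_\alpha:\alpha\in S\}$ are linearly independent as functions on $\R^d$, so the evaluation map $\R^d\to\R^S$, $y\mapsto(q_\alpha(y))_{\alpha\in S}$, has image spanning $\R^S$; choosing $N$ points whose image vectors form a basis of $\R^S$ yields the desired invertible matrix.

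The conceptual content is minimal once $\tau(p)$ is identified with the span of translates; the only nontrivial point is selecting enough test points to invert the Taylor relation, and this is precisely what Lemma~\ref{1} supplies.
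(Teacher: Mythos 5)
Your proof is correct and follows essentially the same route as the paper: Taylor's formula gives that every translate is a linear combination of partial derivatives, and the reverse inclusion is obtained by choosing evaluation points that make the matrix $\bigl(q_\alpha(y^{(i)})\bigr)$ invertible, which is guaranteed by the linear independence of the monomials (Lemma~\ref{1}). Your explicit identification of $\tau(p)$ with the span of the translates, and your justification that the evaluation map $y\mapsto(q_\alpha(y))_{\alpha\in S}$ has image spanning $\R^S$, merely spell out steps the paper leaves implicit.
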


\begin{proof}
The statement follows immediately from the Taylor Formula
\begin{equation}\label{Taylor}
\tau_yp=\sum_{|\alpha|\leq \deg p} \frac{1}{\alpha!} \partial^{\alpha}p\, q_{\alpha}(y)
\end{equation}
which holds for each polynomial in $\R[x]$ and for every $x$ in $\R^d$.  Indeed, this formula shows that every translate of $p$ is a linear combination of partial derivatives \hbox{of $p$.} On the other hand, let $s$ denote the number of different multi-indices $\alpha$ with \hbox{$\alpha\leq \deg p$.} As the monomials $q_{\alpha}$ are linearly independent for different multi-indices $\alpha$ there exist elements $y_j$ for $j=1,2,\dots,s$ such that the quadratic matrix $\bigl(q_{\alpha}(y_j)\bigr)$ is regular (see, for example, \cite[Chapter 14]{AD}). Substituting $y_j$ for $y$ in the above equation we obtain a system of linear equations with regular matrix from which we can express $\partial^{\alpha}p$ as a linear combination of the translates $\tau_{y_j}p$ of $p$, hence all these partial derivatives belong to $\tau(p)$.
\end{proof}

\begin{lemma}\label{3}
Let $p:\R^d\to\R$ be a polynomial. Then $\sigma(p)$ is generated by all monomials of the form $\partial^{\alpha}p(0) x^{\alpha}$.
\end{lemma}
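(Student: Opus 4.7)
The plan is to mirror the argument of Lemma \ref{2}, exploiting the fact that dilation acts diagonally on monomials. First I would write $p(x)=\sum_\alpha c_\alpha x^\alpha$ with $c_\alpha = \frac{1}{\alpha!}\partial^\alpha p(0)$, and observe that
$$\sigma_y p(x) \,=\, p(x\cdot y) \,=\, \sum_{\alpha} c_\alpha (x\cdot y)^\alpha \,=\, \sum_{\alpha} c_\alpha y^\alpha x^\alpha.$$
This identity immediately yields the inclusion $\sigma(p)\subseteq \span\{\partial^\alpha p(0)\, x^\alpha:\alpha\in\N^d\}$, since each dilate is visibly a linear combination of monomials with the prescribed coefficients.

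For the opposite inclusion I need to recover each nonzero term $c_\alpha x^\alpha$ as a linear combination of finitely many dilates. Letting $S=\{\alpha:c_\alpha\neq 0\}$ and $s=|S|$, Lemma \ref{1} gives linear independence of the monomials $\{q_\alpha:\alpha\in S\}$, so---exactly as in the proof of Lemma \ref{2}---one can choose points $y_1,\dots,y_s$ in $\R^d$ making the $s\times s$ matrix $\bigl(q_\alpha(y_j)\bigr)_{j,\alpha\in S}$ regular. Substituting each $y_j$ into the displayed formula produces an invertible linear system whose unknowns are the $c_\alpha x^\alpha$, and inversion expresses each $c_\alpha x^\alpha$---equivalently $\partial^\alpha p(0)\, x^\alpha$---as a linear combination of the dilates $\sigma_{y_j} p$, which lie in $\sigma(p)$ by definition.

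The argument is essentially routine once the key observation---the diagonal action of $\sigma_y$ on the monomial basis---has been made, so there is no real obstacle. In fact, this dilation step is strictly simpler than the translation step in Lemma \ref{2}, because no Taylor expansion is required: the diagonal form of the action means that the same Vandermonde-type invertibility argument does all the work directly.
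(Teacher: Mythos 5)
Your proof is correct and follows essentially the same route as the paper's: expand $p$ in the monomial basis (equivalently, Taylor at $0$), note that $\sigma_y$ acts diagonally so $\sigma_y p=\sum_\alpha c_\alpha y^\alpha x^\alpha$, and invert a regular matrix $\bigl(q_\alpha(y_j)\bigr)$ obtained from the linear independence of the monomials to recover each $\partial^\alpha p(0)\,x^\alpha$ from finitely many dilates. The only cosmetic difference is that you restrict the index set to the nonzero coefficients, which is harmless.
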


\begin{proof}
We use Taylor Formula \eqref{Taylor} again. We have for $y=0$
\begin{equation}\label{Taylor1}
p(x)=\sum_{|\alpha|\leq \deg p} \frac{1}{\alpha!} \partial^{\alpha}p(0) x^{\alpha}
\end{equation}
which means that it is enough to show that every monomial term on the right side is in $\sigma(p)$. In other words, we have to show that $\partial^{\alpha}p(0) q_{\alpha}$ is in $\sigma(p)$ for each $\alpha$ with $|\alpha|\leq \deg p$. By \eqref{Taylor1}, we have
\begin{equation}\label{Taylor2}
\sigma_{\lambda}p=\sum_{|\alpha|\leq \deg p} \frac{1}{\alpha!} q_{\alpha}(\lambda) \partial^{\alpha}p(0) q_{\alpha}
\end{equation}
for each $\lambda$ in $\R^d$. As the functions $\frac{1}{\alpha!} q_{\alpha} $ are linearly independent for different multi-indices $\alpha$ there exist elements $\lambda_j$ for $j=1,2,\dots,s$ such that the quadratic matrix $\bigl(\frac{1}{\alpha!}\,q_{\alpha}(\lambda_j)\bigr)$ is regular. Substituting $\lambda_j$ for $\lambda$ in the above equation we obtain a system of linear equations with regular matrix from which we can express $\partial^{\alpha}p(0) q_{\alpha}$ as a linear combination of the dilates $\sigma_{\lambda_j}p$ of $p$, hence all these monomials belong to $\sigma(p)$.
\end{proof}

\begin{corollary} \label{nuevo}
A linear space of real polynomials in several variables is dilation invariant if and only if it admits a basis formed by monomials. 
\end{corollary}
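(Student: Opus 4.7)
My plan has two directions, both of which reduce immediately to the preceding lemmas.

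\textbf{Sufficiency.} Suppose $V$ admits a basis of monomials. For any multi-index $\alpha$ and any $y\in\R^d$, I compute
$$
\sigma_y q_{\alpha}(x)=q_{\alpha}(x\cdot y)=(x_1y_1)^{\alpha_1}\cdots(x_dy_d)^{\alpha_d}=q_{\alpha}(y)\,q_{\alpha}(x),
$$
so each monomial spans a one-dimensional dilation-invariant subspace. Consequently the linear span of any collection of monomials is dilation invariant, which gives one implication.

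\textbf{Necessity.} Suppose $V$ is dilation invariant. Since $V$ is a linear space containing each of its elements $p$, and is closed under dilations, we have $\sigma(p)\subseteq V$ for every $p$ in $V$. Lemma~\ref{3} says $\sigma(p)$ is generated by the monomials $\partial^{\alpha}p(0)\,q_{\alpha}$, so $q_{\alpha}$ itself lies in $V$ whenever $\partial^{\alpha}p(0)\neq 0$, i.e.\ whenever $q_{\alpha}$ appears with nonzero coefficient in the expansion of $p$ given by \eqref{Taylor1}. In particular, every $p$ in $V$ is a linear combination of monomials that themselves lie in $V$.

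\textbf{Assembling the basis.} Let $M=\{q_{\alpha}:q_{\alpha}\in V\}$. By the previous paragraph, $M$ spans $V$; by Lemma~\ref{1} the elements of $M$ are linearly independent. Hence $M$ is a basis of $V$ consisting of monomials, which completes the proof.

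There is no real obstacle here: the content is entirely in Lemma~\ref{3}, and the only point requiring mild care is the observation that dilation invariance of $V$ together with linearity forces the inclusion $\sigma(p)\subseteq V$ for every $p\in V$, which lets Lemma~\ref{3} conclude that each monomial term of $p$ lies in $V$.
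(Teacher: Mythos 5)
Your proof is correct and is exactly the argument the paper intends: the paper's own proof is just the word ``Obvious,'' relying on the immediately preceding Lemma~\ref{3} for necessity and on the fact that $\sigma_y q_{\alpha}=q_{\alpha}(y)q_{\alpha}$ for sufficiency, both of which you spell out properly (including the use of Lemma~\ref{1} for linear independence of the spanning monomials). No issues.
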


\begin{proof}
Obvious.
\end{proof}

\begin{corollary}\label{6}
Let $p:\R^d\to\R$ be a polynomial. Then $\tau\sigma(p)$ is generated by all monomials of the form $x^{\beta}$ such that, for a certain multi-index 
$\alpha$,    $\beta\leq\alpha$ and $ \partial^{\alpha}p(0)\neq 0$. In particular, if $\partial^{\alpha}p(0) \ne 0$, then $\tau\sigma(p)$ includes $x^{\beta}$ for each $\beta$ with $\beta\leq \alpha$.
\end{corollary}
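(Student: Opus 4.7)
The plan is to establish equality between $\tau\sigma(p)$ and the linear span $V$ of the monomials $q_\beta$ with $\beta\leq\alpha$ for some $\alpha$ satisfying $\partial^\alpha p(0)\neq 0$, by proving containment in both directions.

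First I would show $V\subseteq \tau\sigma(p)$. By Lemma \ref{3}, the space $\sigma(p)\subseteq \tau\sigma(p)$ already contains $q_\alpha$ whenever $\partial^\alpha p(0)\neq 0$. Since $\tau\sigma(p)$ is also translation invariant, Lemma \ref{2} applied to $q_\alpha$ shows that every partial derivative $\partial^\gamma q_\alpha$ belongs to $\tau\sigma(p)$. A direct computation gives $\partial^\gamma q_\alpha= \frac{\alpha!}{(\alpha-\gamma)!}\, q_{\alpha-\gamma}$ when $\gamma\leq \alpha$ and $0$ otherwise, so letting $\gamma$ range over multi-indices with $\gamma\leq\alpha$ we recover every $q_\beta$ with $\beta\leq\alpha$. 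Thus all the proposed generators lie in $\tau\sigma(p)$, and so does $V$.

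For the reverse containment $\tau\sigma(p)\subseteq V$, it suffices to verify that $V$ is itself a TDI linear space containing $p$. The Taylor formula \eqref{Taylor1} expresses $p$ as a linear combination of those $q_\alpha$ with $\partial^\alpha p(0)\neq 0$, each of which lies in $V$ (taking $\beta=\alpha$), so $p\in V$. For dilation invariance, $\sigma_y q_\beta = y^\beta q_\beta$ is a scalar multiple of $q_\beta$, hence stays in $V$. For translation invariance, the binomial expansion
\[
\tau_y q_\beta(x)=(x+y)^\beta=\sum_{\gamma\leq\beta}\binom{\beta}{\gamma} y^{\beta-\gamma} q_\gamma(x)
\]
shows that every translate of $q_\beta$ is a linear combination of the monomials $q_\gamma$ with $\gamma\leq\beta$; since $\beta\leq\alpha$ for some witnessing $\alpha$, we have $\gamma\leq\alpha$ as well, so each $q_\gamma$ lies in $V$. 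Hence $V$ is TDI, and by minimality of $\tau\sigma(p)$ we obtain $\tau\sigma(p)\subseteq V$.

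Combining both inclusions yields $\tau\sigma(p)=V$, which is exactly the claim; the ``in particular'' statement is an immediate consequence, since any single $\alpha$ with $\partial^\alpha p(0)\neq 0$ already forces every $q_\beta$ with $\beta\leq\alpha$ into $V$. There is no real obstacle here: the key point that could conceivably be mishandled is the downward passage from $\alpha$ to all $\beta\leq\alpha$, which is supplied cleanly on the one side by differentiation (derivatives of $q_\alpha$ reach exactly the $q_\beta$ with $\beta\leq\alpha$) and on the other by the binomial expansion of translates, so the two hidden monotonicities match perfectly.
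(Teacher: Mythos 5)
Your proof is correct and is essentially the fully spelled-out version of the paper's one-line argument, which simply invokes Lemma \ref{2} and Lemma \ref{3}: you obtain the monomials $q_\beta$ with $\beta\leq\alpha$ from $q_\alpha\in\sigma(p)$ via translation invariance and differentiation, and you verify minimality by checking that the span of these monomials is itself a TDI space containing $p$. No discrepancy with the paper's intended reasoning.
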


\begin{proof}
Obvious, by Lemma \ref{2} and Lemma \ref{3}.
\end{proof}

We introduce the following notation: for a subset $H$ of $\mathbb{R}[x]$ we let
$$
\Omega_H=\{\alpha\in\mathbb{N}^d :x^{\alpha}\enskip\text{is in}\enskip H\}.
$$

Our main theorem follows.

\begin{theorem} \label{main}
If a sequence in a TDI-subspace in the polynomial ring $\R[x]$ converges pointwise to
a polynomial, then this polynomial belongs to the subspace, too.
\end{theorem}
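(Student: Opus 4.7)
The plan is to reduce the theorem to a combinatorial statement by pinning down the precise structure of $V$, and then to annihilate unwanted monomials using finite-difference operators that commute with pointwise limits. First, Corollary \ref{nuevo} guarantees that $V$ has a basis of monomials, so $V = \operatorname{span}\{q_\alpha : \alpha \in \Omega_V\}$. Applying Corollary \ref{6} to each $q_\alpha \in V$ (for which $\partial^\alpha q_\alpha(0) = \alpha! \neq 0$) shows that every $q_\beta$ with $\beta \leq \alpha$ also lies in $V$; that is, $\Omega_V$ is a \emph{lower set} in the componentwise partial order on $\mathbb{N}^d$. This rigid combinatorial structure of $V$ is what the proof will exploit.

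The key tool is the operator $\Delta^{\beta_0}$, which, being a fixed finite linear combination of translations, commutes with pointwise limits. I will also use two elementary identities: $\Delta^{\beta_0} q_\gamma = 0$ unless $\beta_0 \leq \gamma$, and $\Delta^{\beta_0} q_{\beta_0} = \beta_0!$ as a constant. Suppose for contradiction that $p \notin V$; write $p = \sum_{\gamma \in F} c_\gamma q_\gamma$ with $F$ the (finite) support and all $c_\gamma \neq 0$, so that $F \not\subseteq \Omega_V$. Pick $\beta_0$ to be a maximal element of $F \setminus \Omega_V$ in the componentwise order. For every $n$ we then have $\Delta^{\beta_0} p_n \equiv 0$: $p_n$ is a finite combination of $q_\alpha$ with $\alpha \in \Omega_V$, and such an $\alpha$ cannot satisfy $\alpha \geq \beta_0$, because the lower-set property would then force $\beta_0 \in \Omega_V$. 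On the other hand, $\Delta^{\beta_0} p = c_{\beta_0}\beta_0!$, a nonzero constant: any $\gamma \in F$ contributing to $\Delta^{\beta_0} p$ must satisfy $\gamma \geq \beta_0$, and any such $\gamma \neq \beta_0$ would lie in $F \setminus \Omega_V$ (again by the lower-set property) and be strictly greater than $\beta_0$, contradicting the maximality of $\beta_0$.

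Passing to the pointwise limit $\Delta^{\beta_0} p_n(0) \to \Delta^{\beta_0} p(0)$ yields $0 = c_{\beta_0}\beta_0! \neq 0$, the required contradiction. I expect no serious obstacle: the main conceptual step is recognizing that the lower-set property of $\Omega_V$ plays a double role, simultaneously killing $\Delta^{\beta_0} p_n$ for every $n$ and, together with the maximality of $\beta_0$, isolating a single nonzero term in $\Delta^{\beta_0} p$. Once $\beta_0$ is chosen this way, the argument becomes purely algebraic, since finite-difference operators trivially commute with pointwise convergence.
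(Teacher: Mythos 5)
Your proof is correct, and it takes a genuinely different and substantially shorter route than the paper's. The paper first handles the special case in which $\Omega_V$ is a finite union of slabs $Z_i=\{\alpha:\alpha_i\le N_i\}$, applies $\Delta^N$ with $N=(N_1,\dots,N_d)$, and then works through a system of identities obtained by substituting $x_j=1$, summing over $j$, and analyzing the resulting polynomials $F_{n,k}$, $w_n$, $g$ and $h$; the general case is then reduced to this one by enlarging $V$ to $\widetilde V$ with $\widetilde\Omega=\{\beta:\beta_k<\alpha_k \text{ for some } k\}$, where $\alpha$ is an exponent of $p$ outside $\Omega_V$. Your argument replaces all of this with one observation used twice: since $\Omega_V$ is a lower set (which you correctly extract from Corollaries \ref{nuevo} and \ref{6}, noting $\tau\sigma(q_\alpha)\subseteq V$ for each $q_\alpha$ in $V$), no exponent in $\Omega_V$ can dominate the maximal bad exponent $\beta_0$, so $\Delta^{\beta_0}p_n\equiv 0$ for every $n$, while maximality of $\beta_0$ in the support of $p$ outside $\Omega_V$ isolates the single nonzero constant $c_{\beta_0}\beta_0!$ in $\Delta^{\beta_0}p$; evaluating at the origin and passing to the limit gives the contradiction. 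The elementary facts you invoke ($\Delta^{\beta_0}q_\gamma=0$ unless $\beta_0\le\gamma$, $\Delta^{\beta_0}q_{\beta_0}=\beta_0!$, $\Delta^{\beta_0}$ is a fixed finite combination of translates and hence respects pointwise limits, and $p\notin V$ iff its support is not contained in $\Omega_V$, via Lemma \ref{1}) are all sound. What your approach buys is the elimination of both the case reduction and the multi-page computation; choosing $\beta_0$ maximal, rather than merely outside $\Omega_V$, is the step that spares you from having to prove linear independence of the polynomials $\Delta^{\beta_0}q_\gamma$. In a final write-up you should only make explicit the two small points you use silently: that $V=\mathrm{span}\{q_\alpha:\alpha\in\Omega_V\}$, and that Corollary \ref{6} applies because $V$, being TDI and containing $q_\alpha$, contains $\tau\sigma(q_\alpha)$.
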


\begin{proof} 
Let us first assume that $V$ is a TDI-subspace of  $\R[x]$ such that there exist natural numbers $N_1,\cdots, N_d$ satisfying
\[
\Omega_V=\bigcup_{k=1}^{d}Z_i,
\]
where  $Z_i=\{\alpha\in \mathbb{N}^d: \alpha_i\leq N_i\}$ for $i=1,2,\dots,d$.  
\vskip.2cm

Let the sequence $(p_n)_{n\in\N}$ of the TDI-subspace $V$ in $\R[x]$ converge pointwise to the polynomial $p$. Our assumption on $\Omega_V$ implies that,  for $n=0,1,\dots$ we can write $p_n$ in the following form
\[
 p_n(x_1,x_2,\dots,x_d)  = \sum_{k=1}^d\sum_{i=0}^{N_k}f_{n,k,i}(x_1,\dots,x_{k-1},x_{k+1},\dots, x_d)\,x_k^{i}
\]
with some polynomials $f_{n,k,i}$ arbitrary polynomials in $d-1$ variables.
\vskip.2cm

Proving by contradiction we assume that $p$ is not in $V$. We shall use the following notation: for $k=1,2,\dots,d$ let $e_k$ the element of $\R^d$ whose $k$-th component is $1$, all the others are $0$. 
\vskip.2cm

For each $x$ in $\R^d$ we have 
\[
\lim_{n\to\infty} \sum_{k=1}^d\sum_{i=0}^{N_k}f_{n,k,i}(x_1,\dots,x_{k-1},x_{k+1},\dots, x_d)x_k^{i}= p(x_1,x_2,\dots,x_d)
\] 
and $\partial^{\alpha} p(0)\ne 0$ for some $\alpha>N$. We apply the difference operator $\Delta^N$ on both sides of this equation. It is easy to see that
$$
\Delta^N p(x_1,x_2,\dots,x_d)=x_1 x_2\cdots x_d\cdot h(x_1,x_2,\dots,x_d)
$$ 
with some nonzero $h$ in $\R[x]$. On the other hand, on the left hand side we have 
$$
\lim_{n\to\infty}\sum_{k=1}^d\sum_{i=0}^{N_k} \Delta^N f_{n,k,i}(x_1,\dots,x_{k-1},x_{k+1},\dots, x_d)x_k^{i}= 
$$
$$
\lim_{n\to\infty}\sum_{k=1}^d F_{n,k}(x_1,\dots,x_{k-1},x_{k+1},\dots,x_d) 
$$
with some polynomials $F_{n,k}$ for $n=0,1,\dots$ and $k=1,2,\dots,d$. Now we substitute successively $x_j=1$ for $j=1,2,\dots,d$ and we let $n\to\infty$ to obtain

\begin{eqnarray*}
&\ &x_2\cdot x_3\cdots x_d h(1,x_2,\cdots,x_d) = \\ 
&\ &  \lim_{n\to\infty} F_{n,1}(x_2,x_3,\dots,x_d) +F_{n,2}(1,x_3,\dots,x_d)+\dots +F_{n,d}(1,x_2,\dots,x_{d-1}) \\
&\ & x_1\cdot x_3\cdots x_d\, h(x_1,1,\dots,x_d)  = \\
&\ & \lim_{n\to\infty}   F_{n,1}(1,x_3,\dots,x_d) +F_{n,2}(x_1,x_3,\dots,x_d)+\dots +F_{n,d}(x_1,1,\dots,x_{d-1}) \\
&\ &  \vdots  \\
&\ & x_1\cdot x_2\cdots x_{d-1}\, h(x_1,\cdots,x_{d-1},1) = \\
&\ & \lim_{n\to\infty}  F_{n,1}(x_2,\dots,x_{d-1},1) +F_{n,2}(x_1,x_3,\dots,x_{d-1},1)+\dots +F_{n,d}(x_1,\dots,x_{d-1})
\end{eqnarray*}
We can write this system of equations in the more compact form 
$$
\lim_{n\to\infty} \sum_{k=1}^d F_{n,k}(x_1,\dots,x_{j-1},\widehat{x}_j,x_{j+1},\dots x_{k-1},\widecheck{x}_k,x_{k+1},\dots,x_d)=
$$
$$
x_1\cdot x_2\cdots \widehat{x}_j\dots \cdot x_d\, h(x_1, x_2,\dots, \widehat{x}_j, \dots, x_d)
$$
for $j=1,2,\dots,d$ where $\widehat{x}_j$ means that $x_j=1$ and $\widecheck{x}_k$ means that the variable $x_k$ is missing. Now we sum up these equations for $j=1,2,\dots,d$. On the left hand side we recover the sum
$$
\sum_{k=1}^d F_{n,k}(x_1,\dots,x_{k-1},x_{k+1},\dots,x_d),
$$
which has the limit $x_1 x_2\cdots x_d\cdot h(x_1,x_2,\dots,x_d)$ as $n$ tends to infinity, and a sum $w_n(x_1,x_2,\dots,x_d)$ of polynomials each of them depending on $d-2$ out of the $d$ variables only.  
Obviously, we have 
$$
g(x_1,\cdots,x_d)=\lim_{n\to\infty} w_n(x_1,x_2,\dots,x_d)=
$$
$$
 \sum_{j=1}^d\left(h(x_1,\cdots,x_{j-1},1,x_j,\cdots,x_d)\prod_{k\neq j}x_k\right) - h(x_1,\cdots,x_{d})\prod_{k=1}^dx_k.
$$
As every term of the sum defining $w_n$ depends on $d-2$ variables only, it follows that with $\beta=(1,1,\dots,1,0)$ we have
$\Delta^{\beta}g=\lim_{n\to\infty}\Delta^{\beta}w_n=0$. As $g$ is a polynomial, this implies $\partial^{\beta} g=0$.
On the other hand, the assumption $h\neq 0$ implies that  
\begin{equation}\label{h}
h(x_1,\cdots,x_d)=\sum_{k=0}^sc_k(x_1,x_2,\cdots,x_{d-1})x_d^k
\end{equation}
for some natural number $s$ and polynomials $c_k$ in $d-1$ variables for $k=1,2,\dots,s$,  not all of them being identically zero.  A simple computation shows that
$$
0=\partial^{\beta}g=
\partial^{\beta}\left( x_1x_2\cdots x_{d-1}h(x_1,\cdots,x_{d-1},1)- x_1x_2\cdots x_{d}h(x_1,\cdots,x_{d}) \right),
$$
hence, substituting equation \eqref{h} into this formula, we obtain
\begin{eqnarray*}
0 &=&  \partial^{\beta} \left( \sum_{k=0}^s  x_1x_2\cdots x_{d-1}c_k(x_1,x_2,\cdots,x_{d-1})\right) \\
&\ & 
\ \ \ - \sum_{k=0}^s \partial^{\beta}\left (x_1x_2\cdots x_{d-1}c_k(x_1,x_2,\cdots,x_{d-1})\right)x_d^{k+1}.
\end{eqnarray*}
It follows that 
$$ 
\partial^{\beta}\left (x_1x_2\cdots x_{d-1}c_k(x_1,x_2,\cdots,x_{d-1})\right)=0, 
$$
for $k=0,1,\dots,s$. Now we assume that 
$$
c_k(x_1,\cdots,x_{d-1})=\sum_{|\alpha|\leq l}a_{\alpha,k}x_1^{\alpha_1}\cdots x_{d-1}^{\alpha_{d-1}}
$$ 
is not identically zero. Then 
\[
x_1x_2\cdots x_{d-1}c_k(x_1,x_2,\cdots,x_{d-1})= \sum_{|\alpha|\leq l}a_{\alpha,k}x_1^{\alpha_1+1}\cdots x_{d-1}^{\alpha_{d-1}+1},
\]
and 
$$
\partial^{\beta}\left (x_1x_2\cdots x_{d-1}c_k(x_1,x_2,\cdots,x_{d-1})\right )= 
$$
$$
\sum_{|\alpha|\leq l}(\alpha_1+1)\cdots (\alpha_{d-1}+1)a_{\alpha,k}x_1^{\alpha_1}\cdots x_{d-1}^{\alpha_{d-1}},
$$
which vanishes identically if and only if all coefficients $a_{\alpha,k}$ are zero. Hence the polynomial $h$ vanishes identically, which contradicts our assumptions. This proves the result for the very special case when $\Omega_V$ admits a decomposition of the form 
\[
\Omega_V=\bigcup_{k=1}^{d}Z_i,
\]
where  $Z_i=\{\alpha\in \mathbb{N}^d: \alpha_i\leq N_i\}$ for $i=1,2,\dots,d$.
\vskip.2cm

Now let $V$ be an arbitrary TDI-subspace of $\mathbb{R}[x]$ and let the sequence $(p_n)_{n\in\N}$ of elements of  $V$  converge pointwise to the polynomial $p$. Assume that 
$$
p(x)=\sum_{|\gamma|\leq \deg(p)}a_{\gamma}x^{\gamma}
$$ 
with $a_{\alpha}\neq 0$ for some  $\alpha=(\alpha_1,\cdots,\alpha_d)$ which is not in $\Omega_V$. We define
$$
\widetilde{\Omega} = \{ \beta: \beta_k < \alpha_k \text{ for at least one } k \}, \text{ and } \widetilde{V}=\mathbf{span}\{x^\alpha:\alpha\in \widetilde{\Omega} \}.
$$ 

Then $\widetilde{V}$ is a TDI-subspace of $\mathbb{R}[x]$ and  $\Omega_V \subseteq \widetilde{\Omega}$. Indeed, assuming that $\beta$ is in $\Omega_V$ with $\beta\not\in\widetilde{\Omega}$ gives $\beta_k \geq \alpha_k$ for all $k$, so $\alpha \leq \beta$, hence $\alpha$ is in $\Omega_V$, a contradiction.
\vskip.2cm

Obviously, $\widetilde{\Omega}=\bigcup_{k=1}^d Z_k$, where $Z_k = \{ \beta: \beta_k < \alpha_k \}$. Hence, by computations we made above, we  conclude that $p$ is in $\widetilde{V}$ which, by our construction, is impossible. This proves the theorem.

\end{proof}

 Note that without translation invariance  Theorem \ref{main} fails to hold. Indeed, if $\mathcal{P}$ denotes the set of prime numbers, M\"{u}ntz theorem \cite{A} guarantees that the linear span $V$ of the monomials $x\mapsto x^p$ with $p$ in  $\mathcal{P}\cup 2\mathcal{P}\cup\{0\}$ is dense in $C[a,b]$ over any interval $[a,b]$. In particular, we can find a sequence $(p_n)_{n\in\N}$ in $V$ such that 
 $$
 \|p_n-x^{100}\|_{\mathcal{C}[-n,n]}<\frac{1}{n}
 $$
holds for $n=1,2,\dots$. Obviously, $p_n$ converges pointwise to $x\mapsto x^{100}$, which is not in $V$. Furthermore, by Lemma \ref{nuevo} we have that $V$ is dilation invariant.
\vskip.2cm
 
Having proved our main result for TDI-spaces of polynomials a comparison between these spaces and the class of translation invariant spaces of polynomials should be of interest.  Here we present an extremal example. 

\begin{example}
Let $V$ be the set of all polynomials in two variables of the form $(x,y)\mapsto p(x+y)$, where $p$ is any polynomial in a single variable.
Then, by the Binomial Theorem, $V$ is 
translation invariant, and all monomials $x^ay^b$ with positive integers $a,b$ belong to $W$, the smallest TDI-space  which contains $V$. It follows that $W$ is the set of all polynomials in two variables, and the co-dimension of $V$ in $W$ is infinite. 
\end{example}
Finally, let us comment that in this example  a result analogous to Theorem \ref{main} can easily be proved for the space  $V$.

\end{document}